\documentclass[12pt]{article}

\linespread{1.6}

\begin{document}

\begin{titlepage}

\vspace*{-2cm}

\vspace{.5cm}

\begin{centering}

\huge{Acid zeta function and ajoint acid zeta function}

\vspace{.5cm}

\large  {Jining Gao }\\

\vspace{.5cm}

 Department of Mathematics, Shanghai Jiaotong University, Shanghai, P. R. China

Max-Planck Institute for Mathematics, Bonn, Germany.

\vspace{.5cm}

\begin{abstract}

In this paper we set up  theory of acid zeta function and ajoint acid zeta function ,
based on the theory, we point out a reason to doubt the truth of Riemann hypothesis and also
as a consequence, we give out some new RH equivalences .

\end{abstract}

\end{centering}

\end{titlepage}

\pagebreak

\def\lh{\hbox to 15pt{\vbox{\vskip 6pt\hrule width 6.5pt height 1pt}
  \kern -4.0pt\vrule height 8pt width 1pt\hfil}}
\def\blob{\mbox{$\;\Box$}}
\def\qed{\hbox{${\vcenter{\vbox{\hrule height 0.4pt\hbox{\vrule width
0.4pt height 6pt \kern5pt\vrule width 0.4pt}\hrule height
0.4pt}}}$}}

\newtheorem{theorem}{Theorem}
\newtheorem{lemma}[theorem]{Lemma}
\newtheorem{definition}[theorem]{Definition}
\newtheorem{corollary}[theorem]{Corollary}
\newtheorem{proposition}[theorem]{Proposition}
\newcommand{\proof}{\bf Proof.\rm}

\section{Introduction}

The motivation of constructing acid zeta function is to study distribution of Riemann zeros.
Let's take a look at how it comes up . First of all, we can consider  Melling transformation of 
$N(T)$, the distribution of non-trivial Riemann zeros and it's a meromorphic function so called adjoint acid 
zeta function on some half plane. Of course, we are eager to get full information of this  function,but
unfortunately, we can not know it further without RH. That's why we have to consider another function
so called acid zeta function which can be explicitly figured out . Under the RH ,  Acid zeta function is
equal to ajoint acid zeta function. With some deep discussion on acid zeta function, we will see 
 maginitude of acid zeta function increases along vertical direction with at most polynomial increasing,
when RH is true but explicit formula of acid zeta function contains an  isolated term which has expotient increaseness along
 vertical direction and we have reseason to doubt the truth of RH.This paper has three sections. 
In the first section, we will introduce acid zeta function and study it's properties in detail including analytic continuation ,explicit formula. 
In the second section we will introduce ajoint acid zeta function and give out a formula so called "half explicit" formula   .
In the last section we  set up a remarkable formula which reflects relationship between the acid zeta function and the adjoint acid
zeta function, as a consequence, we prove a "numberical" RH equivalence and point out infinite number of "numerical"
equivalences can be obtained in same way.  Our main tools in this  paper  are one variable complex analysis and some basic 
Riemann zeta function theory .  Most theorems of this paper are obtained through many complicated residues 
compution, to avoid over complicated version, we add an appendix to help reader to study more easily.

\section{Acknowledgements}
This paper is finished during the author visiting at Max-Palanck institute for mathematics(MPIM) in Bonn and is
supported by research grant from MPIM, the author feel happy to have a chance to express many thanks to MPIM.

\section{Acid zeta function}
In this section we will define and study properties of acid zeta function.
{\bf Definition.} Let $\zeta$ be the Riemann zeta function,$\rho_i$ be all nontrival Riemann zeros which satisfy 
$Im \rho_{i}> 0$.Suppose $Re s>1$, we define a function of $s$ as follows:

\begin{eqnarray}
\zeta_{a}(s)=\sum_{m}\frac{1}{(\frac{\rho_{m}-\frac{1}{2}}{i})^{s}}\nonumber
\end{eqnarray}

so called acid zeta function.
\bigskip

Obviously, $\zeta_{a}(s)$ is analytic in the region $Re s>1$,in order to extend domain of this function to the region $Re s<1$ as a meromorphic function , we need following theorem.

\begin{theorem}
When $1<Res<2$, we have that
\begin{eqnarray}
\zeta_{a}(s)= \frac{sin\frac{\pi}{2}s}{(s-1)\pi}\int_{\frac{1}{2}}^{+\infty}\frac{\psi(t+\frac{1}{2})}{t^{s-1}}dt\nonumber
\end{eqnarray}
Where $\psi(t)=\frac{d}{dt}(\frac{\xi^{'}}{\xi}(t))$

\end{theorem}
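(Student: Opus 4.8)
The plan is to feed into the integral the partial-fraction expansion of $\psi=\left(\frac{\xi'}{\xi}\right)'$ over the nontrivial zeros, interchange the sum with the integral, evaluate each resulting piece as a classical Beta integral, and reassemble; the prefactor $\frac{\sin(\pi s/2)}{(s-1)\pi}$ will fall out of the $\Gamma$-reflection formula together with the symmetry $\rho\leftrightarrow 1-\rho$ of the zero set.

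First I would collect the analytic input on $\xi$. Since $\xi$ is entire of order $1$ with zero set exactly $\{\rho\}$, the Hadamard factorization gives $\frac{\xi'}{\xi}(z)=B+\sum_\rho\left(\frac{1}{z-\rho}+\frac1\rho\right)$, hence $\psi(z)=-\sum_\rho\frac{1}{(z-\rho)^2}$, the series converging absolutely because $N(T)\sim\frac{T}{2\pi}\log T$ forces $\sum_\rho|\rho|^{-2}<\infty$. Set $w_\rho=\frac{\rho-1/2}{i}$, so that the $w_\rho$ with $\mathrm{Re}\,w_\rho=\mathrm{Im}\,\rho>0$ are exactly the $w_m$ of the definition of $\zeta_a$; the functional equation $\xi(s)=\xi(1-s)$ and the symmetry $\rho\mapsto\bar\rho$ show that the whole family $\{w_\rho\}$ equals $\{\pm w_m\}_m$ and that $\{w_m\}$ is closed under conjugation. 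Using $\rho-\frac12=iw_\rho$ this yields
\[
\psi\left(t+\frac12\right)=-\sum_\rho\frac{1}{(t-iw_\rho)^2}=-\sum_m\left[\frac{1}{(t-iw_m)^2}+\frac{1}{(t+iw_m)^2}\right].
\]

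Next I multiply by $t^{1-s}$ and integrate over $t$ on the positive half-line (the manipulation naturally produces the integral starting at $0$). Interchanging sum and integral, each term is the standard integral $\int_0^\infty\frac{t^{1-s}}{(t+b)^2}\,dt=\frac{\Gamma(2-s)\Gamma(s)}{\Gamma(2)}\,b^{-s}=\frac{(1-s)\pi}{\sin\pi s}\,b^{-s}$, valid for $0<\mathrm{Re}\,s<2$ and $b\notin(-\infty,0]$; the condition on $b$ holds for $b=\mp iw_m=\pm(\frac12-\rho_m)$ because $\zeta$ has no real zero in the critical strip. Since $w_m$ lies in the open right half-plane, $(\mp iw_m)^{-s}=e^{\pm i\pi s/2}\,w_m^{-s}$ for the principal branch, so the two terms attached to a given $m$ add up to $\left(e^{i\pi s/2}+e^{-i\pi s/2}\right)w_m^{-s}=2\cos\frac{\pi s}{2}\,w_m^{-s}$. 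Summing over $m$ and using $\sin\pi s=2\sin\frac{\pi s}{2}\cos\frac{\pi s}{2}$ gives
\[
\int_0^\infty t^{1-s}\,\psi\left(t+\frac12\right)dt=-\frac{(1-s)\pi}{\sin\pi s}\cdot 2\cos\frac{\pi s}{2}\cdot\zeta_a(s)=-\frac{(1-s)\pi}{\sin\frac{\pi s}{2}}\,\zeta_a(s),
\]
which rearranges into the asserted identity. Alternatively one may integrate by parts first — the boundary terms vanish precisely on $1<\mathrm{Re}\,s<2$, since $\frac{\xi'}{\xi}(\frac12+t)$ is odd in $t$, hence $O(t)$ at $0$ and $O(\log t)$ at $\infty$ — reducing the claim to $\zeta_a(s)=\frac{\sin(\pi s/2)}{\pi}\int_0^\infty t^{-s}\frac{\xi'}{\xi}(\frac12+t)\,dt$, the Mellin transform of the logarithmic derivative of the genus-zero product $z\mapsto\xi(\frac12+\sqrt z)$, after which the same Beta integral closes it.

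The only genuinely delicate step is the interchange of summation and integration: one must check $\sum_\rho\int_0^\infty t^{1-\mathrm{Re}\,s}\,|t-iw_\rho|^{-2}\,dt<\infty$, which again rests on $\sum_\rho|\rho|^{-2}<\infty$ together with the two endpoint constraints — $\mathrm{Re}\,s<2$ for integrability at $t=0$ and $\mathrm{Re}\,s>1$ both for integrability at $t=\infty$ and for convergence of $\zeta_a$ itself — so that the strip $1<\mathrm{Re}\,s<2$ is exactly the natural range of the identity. The remaining ingredients are only the principal-branch bookkeeping for $(\,\cdot\,)^{-s}$ and the reflection and double-angle identities for $\Gamma$ and $\sin$.
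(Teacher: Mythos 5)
Your proof is correct, but it follows a genuinely different route from the paper's. The paper computes by contours: it represents the partial sum $\zeta_a^{T}(s)$ as $\frac{1}{2\pi i}\int_{\Gamma_r^T}((z-\frac{1}{2})/i)^{-s}\frac{\xi'}{\xi}(z)\,dz$ over a rectangle enclosing the zeros up to height $T$, discards the top edge via the $O(\log T)$ bound on $\xi'/\xi$, folds the two vertical edges onto the ray $[1.5,+\infty)$ using the functional equation $\xi(z)=\xi(1-z)$ (which is where the factor $\sin\frac{\pi}{2}s$ is produced), and then must integrate by parts once to tame the non-integrable branch-point singularity of $(z-\frac{1}{2})^{-s}$ at $z=\frac{1}{2}$ before shrinking the small semicircle $\Gamma_r$; that is how $\psi=(\xi'/\xi)'$ and the factor $\frac{1}{s-1}$ enter. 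You instead expand $\psi(z)=-\sum_{\rho}(z-\rho)^{-2}$ by Hadamard, integrate term by term against $t^{1-s}$ using the Beta/reflection integral, and recombine the pair $\rho$, $1-\rho$ so that $e^{i\pi s/2}+e^{-i\pi s/2}=2\cos\frac{\pi}{2}s$ cancels against $\frac{\pi}{\sin\pi s}$ to give exactly the stated prefactor; your branch bookkeeping ($\arg w_m\in(-\frac{\pi}{2},\frac{\pi}{2})$, hence $(\pm iw_m)^{-s}=e^{\mp i\pi s/2}w_m^{-s}$ on the principal branch) is right. You also correctly isolate the one delicate step, the interchange of sum and integral, which reduces to $\int_0^{\infty}t^{1-\mathrm{Re}\,s}|t+\frac{1}{2}-\rho_m|^{-2}dt=O(\lambda_m^{-\mathrm{Re}\,s})$ summed over $m$, convergent precisely for $\mathrm{Re}\,s>1$; this makes transparent why the identity lives exactly on the strip $1<\mathrm{Re}\,s<2$. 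Your route trades all the contour estimates and the regularization at $z=\frac{1}{2}$ for a single dominated-convergence check, which is arguably cleaner, while the paper's route works directly from the defining sum without invoking the product formula. One last point: your integral runs from $0$, not from $\frac{1}{2}$; that is what the paper's own proof actually produces after the substitution $t\mapsto t+\frac{1}{2}$ (and what its later use of the formula assumes), so the lower limit $\frac{1}{2}$ in the displayed statement is a typo that your version silently corrects.
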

\begin{proof}
Let $\Gamma_{r}^T$ be the positively oriented contour consisting of following segments
$\Gamma_{1}^{T}=[1.5, 1.5+Ti]$, $\Gamma_{2}^{T}=[-0.5, -0.5+Ti]$,$\Gamma_{h}^{T}=[-0.5+Ti, 1.5+Ti]$,$\Gamma_{3}=[-0.5, 0.5-r]\cup \Gamma_{r} \cup [0.5+r, 1.5]$,where $\Gamma_{r}$ is a upper half circle with radius $r$ and centered at $z=0.5$.
Set $$\zeta_{a}^{T}(s)=\sum_{0<Im \rho_{m}<T}\frac{1}{(\frac{\rho_{m}-\frac{1}{2}}{i})^s}$$

We have that:
\begin{eqnarray}
\zeta_{a}^{T}(s)=\frac{1}{2\pi i}\int_{\Gamma_{r}^T}\frac{1}{(\frac{z-\frac{1}{2}}{i})^s}\frac{\xi^{'}}{\xi}(z)dz
=\frac{1}{2\pi i}(\int_{\Gamma_{1}^T}+\int_{\Gamma_{2}^T}+\int_{\Gamma_{h}^T}+\int_{\Gamma_{3}})\frac{1}{(\frac{z-\frac{1}{2}}{i})^s}\frac{\xi^{'}}{\xi}(z)dz\nonumber
\end{eqnarray}

we denote above four integrals by $\Sigma_1$,$\Sigma_2$,$\Sigma_3$,$\Sigma_4$ respectively.
let's first figure out $\Sigma_3$, by the appendix ..

$$\left|\Sigma_{3}\right|\leq C_{1}\int_{-0.5}^{1.5}\frac{1}{\left|\frac{\sigma+iT-\frac{1}{2}}{i}\right|^{Res}}\left|\frac{\xi^{'}}{\xi}(\sigma+iT)\right|d\sigma$$
$$=O(T^{-Res}lnT)$$
Thus $$lim_{T\rightarrow 0}\Sigma_{3}=0$$

Since $\xi(z)=\xi(1-z)$, $\frac{\xi^{'}}{\xi}(z)=-\frac{\xi^{'}}{\xi}(1-z)$

\begin{eqnarray}
\int_{\Gamma_{2}^T}\frac{1}{(\frac{z-\frac{1}{2}}{i})^s}\frac{\xi^{'}}{\xi}(z)dz=
\int_{J_{2}^T}\frac{1}{(\frac{\frac{1}{2}-z'}{i})^s}\frac{\xi^{'}}{\xi}(z')dz'\nonumber
\end{eqnarray}

Where $J_{2}^T$ is the straight line from $z'=15-iT$ to $z'=1.5$ and $z'=1-z$

Since $\left|\frac{\xi^{'}}{\xi}(z)\right|=O(lnt) $, we can deform the path 

$\Gamma_{1}^T$ and $J_{2}^T$ to the real axis $[1.5,+\infty]$ when we let $T$ approaches

to infinity, we get 
\begin{eqnarray}
lim_{T\rightarrow \infty}\int_{\Gamma_{2}^T}\frac{1}{(\frac{z-\frac{1}{2}}{i})^s}\frac{\xi^{'}}{\xi}(z)dz=\int_{1.5}^{+\infty}\frac{1}{(\frac{t-\frac{1}{2}}{i})^s}\frac{\xi^{'}}{\xi}(t)dt\nonumber
\end{eqnarray}

and

\begin{eqnarray}
lim_{T\rightarrow \infty}\int_{J_{2}^T}\frac{1}{(\frac{z-\frac{1}{2}}{i})^s}\frac{\xi^{'}}{\xi}(z)dz=-\int_{1.5}^{+\infty}\frac{1}{(\frac{\frac{1}{2}-t}{i})^s}\frac{\xi^{'}}{\xi}(t)dt\nonumber
=-(-1)^{-s}\int_{1.5}^{+\infty}\frac{1}{(\frac{t-\frac{1}{2}}{i})^s}\frac{\xi^{'}}{\xi}(t)dt\nonumber
\end{eqnarray}

To summarize, we have 

\begin{eqnarray}
\zeta_{a}(s)=\frac{sin\frac{\pi}{2}s}{\pi}\int_{1.5}^{+\infty}\frac{1}{(t-\frac{1}{2})^s}\frac{\xi^{'}}{\xi}(t)dt
+\frac{1}{2\pi i}\int_{\Gamma_{3}}\frac{1}{(\frac{z-\frac{1}{2}}{i})^s}\frac{\xi^{'}}{\xi}(z)dz\nonumber\\
=\frac{\sin\frac{\pi}{2}s}{\pi}\int_{0.5+r}^{+\infty}\frac{1}{(t-\frac{1}{2})^s}\frac{\xi^{'}}{\xi}(t)dt 
+\frac{1}{2\pi i}\int_{\Gamma_{r}}\frac{1}{(\frac{z-\frac{1}{2}}{i})^s}\frac{\xi^{'}}{\xi}(z)dz\label{a}
\end{eqnarray}

We can not simply  let $r$ approach to zero in \ref{a} because the singuarity at $z=0.5$ will cause divergent result.

To get through such divergence, we use integration by parts in \ref{a},we have 
\begin{eqnarray}
\zeta_{a}(s)=\frac{sin\frac{\pi}{2}s}{\pi(s-1)}\int_{0.5+r}^{+\infty}\frac{\psi(t)}{(t-\frac{1}{2})^{s-1}}dt\nonumber
+\frac{i^s}{2\pi i(s-1)}\int_{\Gamma_{r}}\frac{\psi(z)}{(\frac{z-\frac{1}{2}}{i})^{s-1}}dz
\end{eqnarray} 
Where $\psi(z)=\frac{d}{dz}(\frac{\xi^{'}}{\xi}(z))$
When $1<Res<2$, $$\left|\int_{\Gamma_{r}}\frac{\psi(z)}{(\frac{z-\frac{1}{2}}{i})^{s-1}}dz\right|=O(r^{2-Res})$$
Let $r$ approaches to zero in \ref{a}, we obtain

\begin{eqnarray}
\zeta_{a}(s)=\frac{sin\frac{\pi}{2}s}{\pi(s-1)}\int_{0.5}^{+\infty}\frac{\psi(t)}{(t-\frac{1}{2})^{s-1}}dt
=\frac{sin\frac{\pi}{2}s}{\pi(s-1)}\int_{0.5}^{+\infty}\frac{\psi(t+\frac{1}{2})}{t^{s-1}}dt\nonumber
\end{eqnarray}
When $1<Res<2$
\end{proof}

  To use above theorem to get analytic continuation, we are going to evaluate $\zeta_{a}(s)$ explicitly.
  
  First of all, we write $\psi(s)$ as sum of three functions as follows:
  
 \begin{eqnarray}
 \psi(s)=-\frac{1}{(s-1)^2}+J(s)+ \frac{d}{ds}(\frac{\xi^{'}}{\xi}(s))\nonumber
 \end{eqnarray}
  
  Where $$J(s)=-\frac{1}{s^2}+\frac{1}{2}\frac{d}{ds}(\frac{\Gamma'}{\Gamma}(s))=\sum_{n=1}^{\infty}\frac{1}{(s+2n)^2}$$
When $1<Res<2$, we have

\begin{eqnarray}
\int_{0}^{\infty}\frac{J(t+\frac{1}{2})}{t^{s-1}}dt=\sum_{n=1}^{\infty}\int_{0}^{\infty}\frac{1}{(t+2n+\frac{1}{2})^{2}t^{s-1}}dt\nonumber
\end{eqnarray}

To carry on integration by parts for every term, we assume tempoarily that $Res<1$ and get
\begin{eqnarray}
\int_{0}^{\infty}\frac{1}{(t+2n-\frac{1}{2})t^{s-1}}dt=(1-s)\int_{0}^{\infty}\frac{1}{(t+2n+\frac{1}{2})t^{s}}dt
=\frac{\pi(s-1)}{sin\pi s(2n+\frac{1}{2})^s}\label{h}
\end{eqnarray}
Thus we have
\begin{eqnarray}
\int_{0}^{\infty}\frac{J(t+\frac{1}{2})}{t^{s-1}}dt=\frac{\pi(s-1)}{sin\pi z}\sum_{n=1}^{\infty}\frac{1}{(2n+\frac{1}{2})^s}\nonumber
=\frac{\pi(s-1)}{2^{s}sin\pi s}\zeta(s,\frac{5}{4})\nonumber
\end{eqnarray}

Where $\zeta(s,q)$ is Hurwitz zeta function.
To evaluate integral $$I=\frac{1}{s-1}\int_{\Gamma_{r}'}\frac{1}{(z-\frac{1}{2})^{2}z^{s-1}}dz$$
Where the path $$\Gamma_{r}'=[0,\frac{1}{2}-r]\cup \Gamma_{r}\cup[\frac{1}{2}+r,+\infty)$$
using integration by parts, $$I=-\int_{\Gamma_{r}'}\frac{1}{(z-\frac{1}{2})z^{s}}dz$$

Let's deform $\Gamma_{r}'$ to immaginary axis $[0,i\infty)$, then
 
$$I=\frac{-1}{i^{s-1}}\int_{0}^{\infty}\frac{1}{(iy-\frac{1}{2})y^s}dy $$
Set $J=\int_{0}^{\infty}\frac{1}{(iy-\frac{1}{2})y^s}dy$, we have
\begin{eqnarray}
J=-\frac{1}{2}\int_{0}^{\infty}\frac{y^{-s}}{y^{2}+\frac{1}{4}}dy-i\int_{0}^{\infty}\frac{y^{1-s}}{y^{2}+\frac{1}{4}}dy\label{b}
\end{eqnarray}

By the substituation of  $y=\sqrt{t}$ in \ref{b} and according appendix, we obtain
\begin{eqnarray}
J=-\frac{1}{4}\int_{0}^{\infty}\frac{t^{\frac{-s-1}{2}}}{t+\frac{1}{4}}dt-\frac{i}{2}\int_{0}^{\infty}\frac{t^{\frac{-s}{2}}}{t+\frac{1}{4}}dt\nonumber
=\frac{\pi 2^{s-1}}{cos\frac{\pi}{2}s}+i\frac{\pi 2^{s-1}}{sin\frac{\pi}{2}s}\nonumber
=\frac{i\pi 2^s}{sin\pi s}e^{-\frac{i\pi}{2}s}\nonumber
\end{eqnarray}

To summarize, when $1<Res<2,$the final integral representation of $\zeta_{a}(s)$ can be described as follows:
\begin{eqnarray}
\zeta_{a}(s)=\frac{2^{s-1}}{cos(\frac{\pi}{2}s)e^{i\pi s}}-\frac{\zeta(s,\frac{5}{4})}{cos(\frac{\pi}{2}s)2^{s+1}}
+\frac{sin\frac{\pi}{2}s}{(s-1)\pi}\int_{\Gamma_{r}'}\frac{J_(z+\frac{1}{2})}{z^{s-1}}dz \label{c}
\end{eqnarray}
Where $J_{2}(s)=\frac{d}{ds}(\frac{\zeta'}{\zeta}(s))$

Because $J_{2}(t)=O(2^{-t})$, the domain of third term of $\zeta_{a}(s)$ can be extended to $-\infty<Res<2$ and

furthermore,  we have following theorem:
\begin{theorem}
Acid zeta function $\zeta_{a}(s)$ is a meromorphic function of whole complex plane and has an explicit formula as
\ref{c} when $-\infty<Res<2$
\end{theorem}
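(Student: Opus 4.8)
The plan is to combine the integral representation \ref{c}, which was derived above under the hypothesis $1<Re\, s<2$, with the defining series of $\zeta_{a}$ valid on $Re\, s>1$, and to prove that the right-hand side of \ref{c} is in fact meromorphic on the entire half-plane $Re\, s<2$. Since the series is holomorphic on $Re\, s>1$ and the two expressions agree on the overlapping strip $1<Re\, s<2$, the identity theorem then produces a single function, meromorphic on the whole complex plane, represented by \ref{c} whenever $Re\, s<2$ and by the series whenever $Re\, s>1$. Thus the only real work is to analyze the three terms of \ref{c} on $Re\, s<2$.

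First I would dispose of the two closed-form terms. The factor $\cos(\frac{\pi}{2}s)$ is entire with simple zeros exactly at the odd integers, while $e^{i\pi s}$ and $2^{s\pm 1}$ are entire and nowhere vanishing, and the Hurwitz zeta function $\zeta(s,\frac{5}{4})$ is meromorphic on the whole plane with a single simple pole at $s=1$. Hence $\frac{2^{s-1}}{\cos(\frac{\pi}{2}s)e^{i\pi s}}-\frac{\zeta(s,\frac{5}{4})}{\cos(\frac{\pi}{2}s)2^{s+1}}$ is meromorphic on the whole plane, with possible poles only at the odd integers (the point $s=1$ being among them).

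The heart of the matter is the integral term $\frac{\sin\frac{\pi}{2}s}{(s-1)\pi}\int_{\Gamma_{r}'}\frac{J_{2}(z+\frac{1}{2})}{z^{s-1}}dz$, where $J_{2}(w)=\frac{d}{dw}(\frac{\zeta'}{\zeta}(w))$. I would first record that for $Re\, w>1$ one has $\frac{\zeta'}{\zeta}(w)=-\sum_{n\geq 2}\Lambda(n)n^{-w}$, hence $J_{2}(w)=\sum_{n\geq 2}\Lambda(n)(\log n)n^{-w}$, so that $\left|J_{2}(t+\frac{1}{2})\right|\leq C\, 2^{-t}$ as $t\to +\infty$ along the real axis, and in fact $J_{2}(t+\frac{1}{2})=(\log 2)^{2}\, 2^{-t-1/2}+O(3^{-t})$. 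This exponential decay makes the tail $\int_{1/2+r}^{\infty}\frac{J_{2}(t+1/2)}{t^{s-1}}dt$ converge absolutely and locally uniformly in $s$ over the whole plane, hence holomorphic in $s$ there by Morera's theorem. Near the lower endpoint $z=0$ the integrand behaves like $J_{2}(\frac{1}{2})\, t^{1-s}$ with $J_{2}(\frac{1}{2})$ finite, which is integrable precisely when $Re(s-1)<1$, i.e. $Re\, s<2$; this endpoint is the genuine source of the restriction in the statement. The small upper semicircle $\Gamma_{r}$ is present because $J_{2}(z+\frac{1}{2})$ has a double pole at $z=\frac{1}{2}$ coming from the pole of $\zeta$ at $1$; on $\Gamma_{r}$ the integrand is bounded for fixed $r$ and depends holomorphically on $s$, and the real contour $\Gamma_{r}'$ stays away from the remaining singularities of $J_{2}(z+\frac{1}{2})$, which sit at the shifted zeros of $\zeta$ (off the real axis or at negative reals). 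Finally, Cauchy's theorem applied to the half-annulus between two radii $r_{1}<r_{2}<\frac{1}{2}$, a simply connected region on which the integrand is holomorphic, shows that $\int_{\Gamma_{r}'}$ does not depend on the small parameter $r$, so \ref{c} is unambiguous. Combining these observations, the integral term is holomorphic on $Re\, s<2$, the whole right-hand side of \ref{c} is meromorphic there, and gluing with the series via the identity theorem yields the meromorphic continuation to the whole complex plane.

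I expect the main obstacle to be the analytic bookkeeping for the integral term: proving the exponential decay of $J_{2}(t+\frac{1}{2})$ with enough uniformity to justify holomorphy in $s$, correctly accounting for the semicircle $\Gamma_{r}$ around the double pole of $J_{2}$ at $z=\frac{1}{2}$ and checking that the value is independent of $r$, and confirming that $Re\, s<2$, and no wider region, is exactly where the endpoint $z=0$ causes no divergence. The remaining ingredients -- meromorphy of the Hurwitz zeta factor and of $1/\cos(\frac{\pi}{2}s)$, and the gluing via the identity theorem -- are routine.
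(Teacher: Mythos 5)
Your proposal is correct and follows the same route as the paper: the paper justifies the theorem solely by noting that the first two terms of \ref{c} are explicit meromorphic functions and that $J_{2}(t)=O(2^{-t})$ lets the integral term extend to $Re\,s<2$, which is exactly the skeleton you flesh out. Your added details (the Dirichlet-series source of the decay of $J_2$, the endpoint analysis at $z=0$ locating the barrier $Re\,s=2$, the $r$-independence of the contour, and the identity-theorem gluing with the series on $Re\,s>1$) are all sound and supply justifications the paper leaves implicit.
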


\bigskip

\section{Adjoint acid zeta function}

To relate acid zeta function to the Riemann hyothesis, we need to define another function so called adjoint acid zeta function as follows:

{\bf Definition.}Let $N(T)$ be distribution function of untrival Riemann zeros whose imaginary part are greater than zero, set $\zeta_{a}^{*}(s)=\int_{0}^{\infty}t^{-s}dN(t)$ , where $Res>1$. We call $\zeta_{a}^{*}(s)$ adjoint acid 
zeta function.
\bigskip

{\bf Remark.} Obviously, if RH is true, acid zeta function is equal to adjoint acid zeta function and vise versa.
 Comparing explicit formula of acid zeta function with adjoint acid zeta function, we will find when $1<Res<2$ acid zeta function contains a term which is increasing exponently along vertical direction and ajoint acid zeta function is bounded
along vertical direction, such difference is impressive and will make us doubt the truth of RH seriously though rigious
disproof haven't come up yet.
\bigskip
 
 In this section, we will dicuss analytic continuation as we did for acid zeta function and get a ''half explicit" formula
 of ajoint acid zeta function, first of all, we need following lemma as prepartion.
\begin{lemma} 
Let $f(z)$ be a holomorphic function in the region $Res>\frac{1}{2}-\delta$($\delta$ is some positive number) 
and $\left|f''(z)\right|=O(\frac{1}{\left|z\right|})$, $f(\bar{z})=\bar{f}(z)$. Set $M(t)=Imf(\frac{1}{2}+it)$,we have that
\begin{eqnarray}
\int_{0}^{\infty}\frac{M''(t)}{t^{s-1}}dt=-sin\frac{\pi}{2}s\int_{0}^{\infty}\frac{f''(\frac{1}{2}+t)}{t^{s-1}}dt\nonumber
\end{eqnarray}
Where $1<Res<2$
\end{lemma}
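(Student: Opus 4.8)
The plan is to express $M(t)=\operatorname{Im} f(\tfrac12+it)$ in terms of $f$ by exploiting the reflection property $f(\bar z)=\bar f(z)$, and then to reduce the claimed identity to a contour-rotation argument of exactly the type used in Theorem~1. Writing $z=\tfrac12+it$ with $t$ real, the hypothesis $f(\bar z)=\bar f(z)$ gives $\overline{f(\tfrac12+it)}=f(\tfrac12-it)$, so that $2i\,M(t)=f(\tfrac12+it)-f(\tfrac12-it)$. Differentiating twice in $t$ yields $2i\,M''(t)=-f''(\tfrac12+it)+f''(\tfrac12-it)$ (the two sign flips from the chain rule combine to a single overall minus on the second term relative to the first). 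Hence
\begin{eqnarray}
\int_{0}^{\infty}\frac{M''(t)}{t^{s-1}}dt=\frac{1}{2i}\int_{0}^{\infty}\frac{-f''(\tfrac12+it)+f''(\tfrac12-it)}{t^{s-1}}dt,\nonumber
\end{eqnarray}
valid for $1<\operatorname{Re}s<2$ since the decay $|f''(z)|=O(1/|z|)$ together with $t^{-(s-1)}$ makes both pieces absolutely convergent at $0$ and at $\infty$ in that strip.

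Next I would treat the two integrals on the right as integrals of the holomorphic function $g(z):=f''(\tfrac12+z)$ (shifted so the base point is the origin) along the two rays $z=it$ and $z=-it$, $t\in(0,\infty)$, against the kernel $(z/i)^{-(s-1)}$ — equivalently, after the substitution, against $t^{-(s-1)}$ with the appropriate branch factors. Because $g$ is holomorphic in a neighborhood of the closed right half-plane shifted by $\tfrac12-\delta$ and decays like $O(1/|z|)$, I can rotate each ray to the positive real axis, picking up the standard factor from $(\pm i)^{-(s-1)}=e^{\mp i\pi(s-1)/2}$. Concretely, rotating $z=it$ to $z=t$ contributes $e^{-i\pi(s-1)/2}\int_0^\infty f''(\tfrac12+t)t^{-(s-1)}dt$, and rotating $z=-it$ contributes $e^{+i\pi(s-1)/2}\int_0^\infty f''(\tfrac12+t)t^{-(s-1)}dt$; the closing arcs at radius $R\to\infty$ and $\varepsilon\to0$ vanish by the $O(1/|z|)$ bound and the strip condition, exactly as the arcs $\Gamma_h^T$ and $\Gamma_r$ were controlled in Theorem~1.

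Assembling the pieces,
\begin{eqnarray}
\int_{0}^{\infty}\frac{M''(t)}{t^{s-1}}dt
=\frac{1}{2i}\left(-e^{-i\pi(s-1)/2}+e^{i\pi(s-1)/2}\right)\int_{0}^{\infty}\frac{f''(\tfrac12+t)}{t^{s-1}}dt
=-\sin\!\Big(\frac{\pi}{2}(s-1)\Big)\int_{0}^{\infty}\frac{f''(\tfrac12+t)}{t^{s-1}}dt,\nonumber
\end{eqnarray}
and since $\sin(\tfrac{\pi}{2}(s-1))=-\cos(\tfrac{\pi}{2}s)$... here I would double-check the trigonometric bookkeeping, because the sign and the exact phase of the branch of $(z/i)^{-(s-1)}$ on each ray is precisely where this argument is most delicate; the intended normalization in the statement is $-\sin\frac{\pi}{2}s$, so the branch cut of the power function must be chosen (as elsewhere in the paper, along the negative real axis relative to the base point $z=\tfrac12$) so that the two rotations produce $e^{\mp i\pi s/2}$ rather than $e^{\mp i\pi(s-1)/2}$, after absorbing the integration-by-parts/shift conventions. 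I expect the main obstacle to be exactly this: pinning down the branch of $(\,(z-\tfrac12)/i\,)^{-(s-1)}$ consistently on the imaginary axis and verifying that the arc contributions vanish under only the hypothesis $|f''|=O(1/|z|)$ in the strip $1<\operatorname{Re}s<2$ — the convergence is borderline at the origin and needs $\operatorname{Re}s<2$, and borderline at infinity and needs $\operatorname{Re}s>1$, so there is no slack. Once the branch is fixed the rest is the same residue-free contour rotation used in Theorem~1.
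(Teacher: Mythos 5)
Your outline is exactly the paper's own proof: use $f(\bar z)=\bar f(z)$ to write $M(t)=\frac{1}{2i}[f(\frac12+it)-f(\frac12-it)]$, differentiate twice, and rotate the two rays $z=\pm it$ onto the positive real axis using holomorphy and the $O(1/|z|)$ decay (your convergence discussion at $0$ and $\infty$ for $1<\mathrm{Re}\,s<2$ is the right justification, which the paper leaves implicit). The one step you explicitly leave open — the phase bookkeeping — is, however, not just delicate but wrong as you wrote it, and it cannot be repaired by a choice of branch. Substituting $z=it$ into $\int_0^\infty f''(\frac12+it)\,t^{1-s}\,dt$ gives $t^{1-s}=(z/i)^{1-s}=i^{s-1}z^{1-s}$ \emph{and} $dt=dz/i$, so the ray $z=it$ contributes the factor $i^{s-1}\cdot i^{-1}=i^{s-2}=-i^{s}$, not $(i)^{-(s-1)}$: you flipped the sign of the exponent and dropped the Jacobian. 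Likewise the ray $z=-it$ contributes $i^{2-s}=-i^{-s}$. These are the factors the paper itself records ("it's equal to $-i^{s}\int\cdots$" and $-i^{-s}\int\cdots$).

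Assembling correctly, $\int_0^\infty M''(t)t^{1-s}dt=\frac{i}{2}\left[-i^{s}+i^{-s}\right]K=+\sin\frac{\pi s}{2}\,K$ with $K=\int_0^\infty f''(\frac12+t)t^{1-s}dt$. So your instinct to reverse-engineer the branch so as to land on the advertised $-\sin\frac{\pi}{2}s$ is the wrong move: no admissible branch convention produces it, and the sign in the lemma as stated is itself erroneous. You can confirm this on the test function $f(z)=1/(z+\frac12)$, for which $M(t)=-t/(1+t^2)$ and both sides reduce via Beta integrals to $\frac{s(1-s)\pi}{2\cos\frac{\pi s}{2}}$, i.e.\ the identity holds with $+\sin\frac{\pi}{2}s$. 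The lesson is that in this kind of contour rotation the Jacobian $dz/(\pm i)$ and the orientation of each ray carry exactly the phase information you were missing; once you track them the "trigonometric bookkeeping" is forced, and you should trust it over the target normalization in the statement.
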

\begin{proof}
Since $f(\bar{z})=\bar{f}(z)$, $$M(t)=Imf(\frac{1}{2}+it)=\frac{1}{2i}[f(\frac{1}{2}+it)-f(\frac{1}{2}-it)]$$
and $$M''(t)=\frac{i}{2}[f''(\frac{1}{2}+it)-f''(\frac{1}{2}-it)]$$
Thus,
\begin{eqnarray}
\int_{0}^{\infty}\frac{M"(t)}{t^{s-1}}dt=\frac{i}{2}(\int_{0}^{\infty}\frac{f"(\frac{1}{2}+it)}{t^{s-1}}dt\nonumber
-\int_{0}^{\infty}\frac{f''(\frac{1}{2}-it)}{t^{s-1}}dt)\label{d}
\end{eqnarray}
We just compute the first integral of \ref{d} and the second one can be dealed with similarly.
Set $z=it$ we get 
\begin{eqnarray}
\int_{0}^{\infty}\frac{f''(\frac{1}{2}+it)}{t^{s-1}}dt=i^{s-2}\int_{0}^{i\infty}\frac{f''(\frac{1}{2}+z)}{z^{s-1}}dz\nonumber
\end{eqnarray}
Deforming path back to real axis, we get that it's equal to $$-i^{s}\int_{0}^{i\infty}\frac{f"(\frac{1}{2}+t)}{t^{s-1}}dt$$
Similarly,
\begin{eqnarray}
\int_{0}^{\infty}\frac{f''(\frac{1}{2}-it)}{t^{s-1}}dt=-i^{-s}\int_{0}^{i\infty}\frac{f''(\frac{1}{2}+t)}{t^{s-1}}dt\nonumber
\end{eqnarray}
That follows the lemma
\end{proof} 
 
Since $N(t)=\frac{1}{\pi}Img(\frac{1}{2}+it)$, where $g(s)=arg(s-1)\pi^{-\frac{s}{2}}\Gamma(\frac{s}{2}+1)\zeta(s)$
we have $$\zeta_{a}^{*}(s)=\frac{1}{\pi}\int_{1}^{\infty}t^{-s}darg(-\frac{1}{2}+it)+\int_{1}^{\infty}t^{-s}dM(t)+
\int_{1}^{\infty}t^{-s}dS_{0}(t)$$

 Here $$M(t)=\frac{1}{\pi}arg\pi^{-\frac{1}{4}+\frac{it}{2}}\Gamma(-\frac{5}{4}+\frac{it}{2})$$ and
 $$S_{0}(t)=\frac{1}{\pi}arg\zeta(\frac{1}{2}+it)$$
 Using integration by parts
 \begin{eqnarray}
 \int_{1}^{\infty}t^{-s}dM(t)=\frac{M'(1)}{s-1}+\frac{1}{s-1}\int_{1}^{\infty}t^{1-s}M''(t)dt\nonumber
\end{eqnarray}
 Since $M''(t)=O(\frac{1}{t})$, assuming $1<Res<2$,we have
 $$\int_{1}^{\infty}t^{1-s}M''(t)dt=\int_{0}^{\infty}t^{1-s}M''(t)dt-\int_{0}^{1}t^{1-s}M''(t)dt$$
let $$f(z)=\pi^{-\frac{z}{2}-1}\Gamma(\frac{z}{2}+1)$$ by the lemma 3 and \ref{h}, we get
 
\begin{eqnarray}
\frac{1}{s-1}\int_{0}^{\infty}t^{1-s}M''(t)dt=\frac{-sin\frac{\pi}{2}s}{\pi(s-1)}\int_{0}^{\infty}t^{1-s}f''(\frac{1}{2}+t)dt=-\frac{\zeta(s,\frac{5}{4})}{2^{s+1}cos\frac{\pi}{2}s}\nonumber
\end{eqnarray}
 and
\begin{eqnarray}
 \frac{1}{\pi}\int_{0}^{\infty}t^{-s}darg(-\frac{1}{2}+it)=-\frac{1}{2\pi}\int_{0}^{\infty}\frac{1}{t^{s}(\frac{1}{4}+t^2)}dt\nonumber\\
=-\frac{1}{4\pi}\int_{0}^{\infty}\frac{1}{t'^{\frac{s+1}{2}}(\frac{1}{4}+t')}dt'\nonumber
=\frac{2^{s-1}}{cos\frac{\pi}{2}s}\nonumber
\end{eqnarray}
We have used substituation $t'=t^2$ in the last two steps,thus $$\frac{1}{\pi}\int_{1}^{\infty}t^{-s}darg(-\frac{1}{2}+it)=\frac{2^{s-1}}{cos\frac{\pi}{2}s}-\frac{1}{\pi}\int_{0}^{1}t^{-s}darg(-\frac{1}{2}+it)$$
Simiarly,
\begin{eqnarray}
\int_{1}^{\infty}t^{-s}dS_{0}(t)=-S_{0}(1)+s\int_{1}^{\infty}\frac{S_{0}(t)}{t^{s+1}}dt\nonumber
=-S_{0}(1)+s\int_{1}^{\infty}t^{-s-1}dS_{1}(t)\nonumber\\
=-S_{0}(1)-sS_{1}(1)+s(s+1)\int_{1}^{\infty}\frac{S_{1}(t)}{t^{s+2}}dt\nonumber
\end{eqnarray}

Where $S_{1}(T)=\int_{0}^{T}S_{0}(t)dt+c_1$ and $c_1$ is a constant , $S_{1}(t)=O(logt)$\cite{L}.
Finally, we get "half explicit" formula for $\zeta_{a}^{*}(s)$ when $Res>-1$ as followings:
\begin{theorem}
When $Res>-1$,
\begin{eqnarray}
\zeta_{a}^{*}(s)=\frac{2^{s-1}}{cos\frac{\pi}{2}s}-\frac{\zeta(s,\frac{5}{4})}{2^{s+1}cos\frac{\pi}{2}s}+s(s+1)\int_{1}^{\infty}\frac{S_{1}(t)}{t^{s+2}}dt+R(s)\nonumber
\end{eqnarray}
Where $$R(s)=-\int_{0}^{1}t^{s}d(M(t)+arg(-\frac{1}{2}+it))-S_{0}(1)-sS_{1}(1)$$
\end{theorem}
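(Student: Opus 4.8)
The plan is to assemble the identity piece by piece from the decomposition of the zero-counting function set up above, to check that each Stieltjes integral splits and each integration by parts is legitimate, and then to read off the domain of validity from the weakest growth bound used along the way.

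First I would record the decomposition $N(t)=\frac1\pi\arg(-\frac12+it)+M(t)+S_0(t)$, which comes from writing $\xi(s)=(s-1)\pi^{-s/2}\Gamma(\frac s2+1)\zeta(s)$ and using $N(t)=\frac1\pi\arg\xi(\frac12+it)$ (this is the function $g$ above, and the identity is the Riemann--von Mangoldt argument). Since the measure $dN$ is carried by the ordinates of the zeros while the first two summands are smooth, the definition $\zeta_a^*(s)=\int t^{-s}\,dN(t)$ splits into the three integrals displayed just before the theorem, all convergent for $\mathrm{Re}\,s>1$.

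Then I would dispatch the three pieces. For the $\arg(-\frac12+it)$ piece, differentiation gives $\frac{d}{dt}\arg(-\frac12+it)=\frac{-1/2}{1/4+t^2}$; writing $\int_1^\infty=\int_0^\infty-\int_0^1$, substituting $t'=t^2$, and using the Mellin evaluation from the appendix yields $\frac{2^{s-1}}{\cos\frac\pi2 s}$ together with an $[0,1]$ remainder. For the $M$-piece, one integration by parts and the same split $\int_1^\infty=\int_0^\infty-\int_0^1$ reduce matters, via the lemma applied to $f(z)=\pi^{-z/2-1}\Gamma(\frac z2+1)$ and the Hurwitz integral identity~\ref{h}, to $-\frac{\zeta(s,5/4)}{2^{s+1}\cos\frac\pi2 s}$ plus the $[0,1]$ contribution $-\int_0^1 t^{-s}\,dM(t)$. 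For the $S_0$-piece, integrating by parts twice and introducing $S_1(T)=\int_0^T S_0(t)\,dt+c_1$ produces the boundary terms $-S_0(1)-sS_1(1)$ and the surviving tail $s(s+1)\int_1^\infty S_1(t)t^{-s-2}\,dt$. Collecting the three leftover $[0,1]$-type terms into $R(s)$ gives the asserted formula for $\mathrm{Re}\,s>1$.

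Finally I would note that every term on the right-hand side is meromorphic on a domain containing $\{\mathrm{Re}\,s>-1\}$: the Hurwitz zeta $\zeta(s,5/4)$ and the elementary trigonometric and exponential factors are meromorphic on the whole plane; $R(s)$ is a finite combination of entire functions and an integral over $[0,1]$; and $\int_1^\infty S_1(t)t^{-s-2}\,dt$ converges locally uniformly as soon as $\mathrm{Re}(s+2)>1$, because $S_1(t)=O(\log t)$. Hence the identity propagates from $\mathrm{Re}\,s>1$ by analytic continuation. The main obstacle is bookkeeping rather than any new idea: one must justify pushing $\int_1^\infty$ out to $\int_0^\infty$ in the $M$-piece and performing the double integration by parts in the $S_0$-piece — this uses $M''(t)=O(1/t)$, $S_0(t)=O(\log t)$, and $S_1(t)=O(\log t)$, together with the decay of the factor $t^{-s}$ to discard the boundary terms at infinity — and one must keep track of the branch of the logarithm in the decomposition of $N$ and of the several boundary constants. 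The cutoff $\mathrm{Re}\,s>-1$ is dictated precisely by the last integration by parts on $S_0$: $S_1=O(\log t)$ is the final unconditional bound invoked, and the tail integral it produces converges exactly for $\mathrm{Re}\,s>-1$.
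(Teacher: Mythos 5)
Your proposal follows essentially the same route as the paper: the Riemann--von Mangoldt decomposition $N=\frac1\pi\arg(-\frac12+it)+M+S_0$, the three-way split of the Stieltjes integral, the appendix Mellin evaluation for the $\arg$ piece, the lemma with $f(z)=\pi^{-z/2-1}\Gamma(\frac z2+1)$ for the $M$ piece, the double integration by parts with Littlewood's bound $S_1(t)=O(\log t)$ for the $S_0$ piece, and analytic continuation to $\mathrm{Re}\,s>-1$. The argument is correct and complete in the same sense as the paper's, so no further comparison is needed.
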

\bigskip
\section{Relationship between acid zeta function and ajoint acid zeta function}
In this section, we will present a theorem which connects acid zeta function and ajoint acid zeta function,
as a consequence, we can make "infinite number of numerical" equivalences of Riemann hypothesis.

\begin{theorem}
When $Res>-1$, we have that
\begin{eqnarray}
\zeta_{a}(s)=\zeta_{a}^{*}(s)+\sum_{n,m=1}^{\infty}\frac{(-1)^{n}s(s+1)\cdots(s+2n-1)}{(2n)!}\frac{(\sigma_{m}-\frac{1}{2})^{2n}}{\lambda_{m}^{s+2n}}\label{e}
\end{eqnarray}
Where $\rho_{m}=\sigma_{m}+i\lambda_{m}$ run over all off critical line zeros of Riemann zeta function ,which have positive
imaginary part.
\end{theorem}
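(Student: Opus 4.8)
The plan is to establish (\ref{e}) first as an identity of absolutely convergent series on the half-plane $Res>1$, and then to propagate it to $Res>-1$ by analytic continuation. Throughout, zeros are counted with multiplicity and $\rho_{m}=\sigma_{m}+i\lambda_{m}$ with $\lambda_{m}>0$. First I would record two elementary observations: since $N$ counts the nontrivial zeros with positive imaginary part by their ordinates (with multiplicity), the definition of the adjoint acid zeta function gives $\zeta_{a}^{*}(s)=\int_{0}^{\infty}t^{-s}dN(t)=\sum_{m}\lambda_{m}^{-s}$ for $Res>1$; and $\frac{\rho_{m}-\frac{1}{2}}{i}=\lambda_{m}-i\left(\sigma_{m}-\frac{1}{2}\right)$. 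Since both Dirichlet series converge absolutely for $Res>1$, this already yields
\begin{eqnarray}
\zeta_{a}(s)-\zeta_{a}^{*}(s)=\sum_{m}\left[\left(\lambda_{m}-i\left(\sigma_{m}-\frac{1}{2}\right)\right)^{-s}-\lambda_{m}^{-s}\right],\nonumber
\end{eqnarray}
the sum running over all zeros with positive imaginary part.

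Next I would expand each summand. Every nontrivial zero satisfies $0<\sigma_{m}<1$, hence $\left|\sigma_{m}-\frac{1}{2}\right|<\frac{1}{2}<\lambda_{m}$, and the binomial expansion yields
\begin{eqnarray}
\left(\lambda_{m}-i\left(\sigma_{m}-\frac{1}{2}\right)\right)^{-s}=\lambda_{m}^{-s}\left(1-\frac{i\left(\sigma_{m}-\frac{1}{2}\right)}{\lambda_{m}}\right)^{-s}=\sum_{k=0}^{\infty}\frac{s(s+1)\cdots(s+k-1)}{k!}\frac{i^{k}\left(\sigma_{m}-\frac{1}{2}\right)^{k}}{\lambda_{m}^{s+k}}.\nonumber
\end{eqnarray}
The term $k=0$ cancels $\zeta_{a}^{*}$, and every zero on the critical line contributes nothing. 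Using $\left|\frac{s(s+1)\cdots(s+k-1)}{k!}\right|\le\frac{|s|(|s|+1)\cdots(|s|+k-1)}{k!}$ and $\left|\sigma_{m}-\frac{1}{2}\right|<\frac{1}{2}$, one gets $\left|\left(\lambda_{m}-i\left(\sigma_{m}-\frac{1}{2}\right)\right)^{-s}-\lambda_{m}^{-s}\right|=O\!\left(|s|\,\lambda_{m}^{-Res-1}\right)$, and since $\sum_{m}\lambda_{m}^{-Res-1}$ converges for $Res>0$ (a consequence of $N(T)=O(T\log T)$), the double series in $m$ and $k\ge1$ converges absolutely; this licenses all the rearrangements below.

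The decisive step is to eliminate the odd powers of $\sigma_{m}-\frac{1}{2}$, and here I would invoke the symmetry of the zeros. By the functional equation $\xi(s)=\xi(1-s)$ together with $\xi(\bar s)=\overline{\xi(s)}$, the map $\rho\mapsto 1-\bar\rho$ is a multiplicity-preserving involution of the set of off critical line zeros with positive imaginary part; it has no fixed point on that set, it fixes each ordinate $\lambda_{m}$, and it carries $\sigma_{m}-\frac{1}{2}$ to $-\left(\sigma_{m}-\frac{1}{2}\right)$. Grouping each such zero with its partner, the contributions of the pair cancel for every odd $k$ and double for $k=2n$; since $i^{2n}=(-1)^{n}$, summing over pairs and then unfolding each pair via $\left(\sigma_{m}-\frac{1}{2}\right)^{2n}=\left(-\left(\sigma_{m}-\frac{1}{2}\right)\right)^{2n}$ gives, for $Res>1$,
\begin{eqnarray}
\zeta_{a}(s)-\zeta_{a}^{*}(s)=\sum_{n=1}^{\infty}\frac{(-1)^{n}s(s+1)\cdots(s+2n-1)}{(2n)!}\sum_{m}\frac{\left(\sigma_{m}-\frac{1}{2}\right)^{2n}}{\lambda_{m}^{s+2n}},\nonumber
\end{eqnarray}
with $m$ now ranging over the off critical line zeros with positive imaginary part; this is precisely the right side of (\ref{e}).

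Finally I would enlarge the domain to $Res>-1$ by analytic continuation. For $n\ge1$ and $Res>-1$ one has $Res+2n>1$ and $\left|\sigma_{m}-\frac{1}{2}\right|^{2n}\lambda_{m}^{-2n}\le\frac{1}{4}(2\lambda_{1})^{-(2n-2)}\lambda_{m}^{-2}$, where $\lambda_{1}$ is the least positive ordinate; so each inner sum is bounded by $\frac{1}{4}(2\lambda_{1})^{-(2n-2)}\sum_{m}\lambda_{m}^{-Res-2}$, a convergent sum times a factor geometrically small in $n$, and combined with the bound on $\frac{s(s+1)\cdots(s+2n-1)}{(2n)!}$ above this shows the double sum in (\ref{e}) is absolutely and locally uniformly convergent on $Res>-1$, hence analytic there. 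By Theorem 2, $\zeta_{a}$ is meromorphic on the whole complex plane, and by Theorem 4, $\zeta_{a}^{*}$ is meromorphic on $Res>-1$; so both sides of (\ref{e}) are meromorphic on $Res>-1$ and agree on $Res>1$, and the identity theorem finishes the proof. I expect the one genuinely delicate point to be the convergence bookkeeping just sketched—the absolute convergence of the double series for $Res>1$ and of the final series for $Res>-1$, which is what makes the interchange of summations and the $\rho\mapsto 1-\bar\rho$ rearrangement legitimate; the algebra itself (binomial expansion plus the pairing of zeros) is routine.
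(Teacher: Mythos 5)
Your proof is correct and follows essentially the same route as the paper: binomial expansion of $\left(\lambda_m - i\left(\sigma_m-\frac{1}{2}\right)\right)^{-s}$, cancellation of the odd powers via the symmetry pairing of off-line zeros, and extension to $Res>-1$ by convergence of the resulting double series. You are merely more careful than the paper about the convergence bookkeeping and about identifying the correct involution $\rho\mapsto 1-\bar\rho$ (the paper loosely writes $\rho_m$ and $1-\rho_m$), which is a welcome tightening rather than a different argument.
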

\begin{proof}
When $Res>1$
\begin{eqnarray}
\zeta_{a}(s)=\sum_{m}\frac{1}{(\frac{\rho_{m}-\frac{1}{2}}{i})^s}
=\sum_{m}\frac{1}{\lambda_{m}}(1+\frac{i(\frac{1}{2}-\sigma_{m})}{\lambda_{m}})^{-s}\nonumber\\
=\sum_{m}\frac{1}{\lambda_{m}^{s}}[1+\sum_{k=1}^{\infty}\frac{((-s)(-s-1)\cdots(-s-k+1)}{k!}(\frac{i( \frac{1}{2}-\sigma_{m})}{\lambda_{m}})^{k}\nonumber]\\
=\sum_{m}\frac{1}{\lambda_{m}^{s}}[1+\sum_{n=1}^{\infty}\frac{(-i)^{k}(-s)(-s-1)\cdots(-s-k+1)}{k!}(\frac{( \frac{1}{2}-\sigma_{m})}{\lambda_{m}})^{k}]\nonumber\\
=\sum_{m}\frac{1}{\lambda_{m}^{s}}+\sum_{n,m=1}^{\infty}\frac{(-1)^{n}s(s+1)\cdots(s+2n-1)}{(2n)!}\frac{(\sigma_{m}-\frac{1}{2})^{2n}}{\lambda_{m}^{s+2n}}\nonumber
\end{eqnarray}
 The last step is due to appearance in pair for $\rho_m$ and $1-\rho_m$.
It's easy to find that above formula hold for $Res>-1$
\end{proof}
With above formula, we can get

\begin{corollary}
RH is true if and only if 
$$\frac{d}{ds}\zeta_{a}(s)|_{s=0}=\frac{d}{ds}\zeta_{a}^{*}(s)|_{s=0}$$
\end{corollary}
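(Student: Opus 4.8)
\medskip

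\noindent\textbf{Proof.}
The plan is to read the corollary off the identity~\ref{e}. Writing $D(s):=\zeta_{a}(s)-\zeta_{a}^{*}(s)$, that identity says, for $\mathrm{Re}\,s>-1$,
\begin{eqnarray}
D(s)=\sum_{n,m=1}^{\infty}\frac{(-1)^{n}\,s(s+1)\cdots(s+2n-1)}{(2n)!}\cdot\frac{(\sigma_{m}-\frac{1}{2})^{2n}}{\lambda_{m}^{s+2n}},\nonumber
\end{eqnarray}
the sum running over the zeros $\rho_{m}=\sigma_{m}+i\lambda_{m}$ lying off the critical line with $\lambda_{m}>0$. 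The first thing to notice is that every summand carries the factor $s$ coming from $P_{n}(s):=s(s+1)\cdots(s+2n-1)$, so $D(0)=0$ unconditionally; this is exactly why the corollary compares the derivatives at $s=0$ rather than the values. Since $D'(0)=\frac{d}{ds}\zeta_{a}(s)|_{s=0}-\frac{d}{ds}\zeta_{a}^{*}(s)|_{s=0}$, it suffices to prove that $D'(0)=0$ if and only if RH holds.

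Next I would differentiate the series term by term at $s=0$. In the $(n,m)$ summand only $P_{n}(s)/\lambda_{m}^{s}$ depends on $s$, and because $P_{n}(0)=0$ while $P_{n}'(0)=(2n-1)!$, the product rule gives
\begin{eqnarray}
\frac{d}{ds}\Big(\frac{P_{n}(s)}{\lambda_{m}^{s}}\Big)\Big|_{s=0}=P_{n}'(0)=(2n-1)!\,.\nonumber
\end{eqnarray}
Hence the $(n,m)$ term contributes $\frac{(-1)^{n}}{2n}\big(\frac{\sigma_{m}-\frac{1}{2}}{\lambda_{m}}\big)^{2n}$ to $D'(0)$. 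Summing over $n\ge1$ for fixed $m$, with $x_{m}:=\frac{\sigma_{m}-\frac{1}{2}}{\lambda_{m}}$ (so $|x_{m}|<1$, since $|\sigma_{m}-\frac{1}{2}|\le\frac{1}{2}<\lambda_{m}$) and using $\sum_{n\ge1}\frac{(-1)^{n}}{n}u^{n}=-\log(1+u)$, I obtain
\begin{eqnarray}
D'(0)=-\frac{1}{2}\sum_{m}\log\big(1+x_{m}^{2}\big)=-\sum_{m}\log\frac{|\rho_{m}-\frac{1}{2}|}{\lambda_{m}},\nonumber
\end{eqnarray}
where the last equality uses $\lambda_{m}^{2}\big(1+x_{m}^{2}\big)=(\sigma_{m}-\frac{1}{2})^{2}+\lambda_{m}^{2}=|\rho_{m}-\frac{1}{2}|^{2}$.

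The argument then closes with an elementary sign observation. Each $\rho_{m}$ appearing in the sum lies off the critical line, so $\sigma_{m}\neq\frac{1}{2}$, hence $|\rho_{m}-\frac{1}{2}|>\lambda_{m}$ and every term $\log\frac{|\rho_{m}-\frac{1}{2}|}{\lambda_{m}}$ is strictly positive; there is no cancellation. Therefore $D'(0)=0$ forces the index set to be empty, i.e. there are no zeros off the critical line, which is RH; conversely, if RH holds the sum is vacuous and $D'(0)=0$.

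Two routine matters remain to be pinned down: the convergence of the final sum and the legitimacy of the term-by-term differentiation. Both follow from $\log(1+x)\le x$ together with $|\sigma_{m}-\frac{1}{2}|\le\frac{1}{2}$ (every nontrivial zero lies in the critical strip), which bounds the general term by $\frac{1}{8\lambda_{m}^{2}}$, and from the classical estimate $\sum_{m}\lambda_{m}^{-2}<\infty$; applied uniformly on a small disc around $s=0$, the same bound dominates the differentiated double series and licenses the interchange of $\frac{d}{ds}$ with the summation as well as the rearrangement of the $n$- and $m$-sums. This bookkeeping is the only genuine obstacle; everything else is a direct computation.
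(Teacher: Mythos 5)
Your proof is correct and follows the paper's own route: differentiate identity \ref{e} at $s=0$, obtain $\frac{d}{ds}\zeta_{a}(s)|_{s=0}-\frac{d}{ds}\zeta_{a}^{*}(s)|_{s=0}=-\frac{1}{2}\sum_{m}\ln\bigl[1+(\frac{\sigma_m-\frac{1}{2}}{\lambda_m})^2\bigr]$, and conclude by the strict positivity of each term. You simply supply the details (the computation $P_n'(0)=(2n-1)!$, the resummation to the logarithm, and the convergence/interchange justification via $\sum_m\lambda_m^{-2}<\infty$) that the paper leaves implicit.
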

\begin{proof}
Taking derivative on both sides of \ref{e}and let $s=0$,we have
\begin{eqnarray}
\frac{d}{ds}\zeta_{a}(s)|_{s=0}= \frac{d}{ds}\zeta_{a}^{*}(s)|_{s=0}-\frac{1}{2}\sum_{m=1}^{\infty}ln[1+(\frac{\sigma_m-\frac{1}{2}}{\lambda_m})^2]\nonumber
\end{eqnarray}
\end{proof}
which immediately follows our corollary.

\bigskip
{\bf Remark.} 
It's interesting to figure out $\zeta_{a}'(0)$ and the special value can be respresented in term of 
 Euler constant, our equivalence look very close to the Volchkov criteria \cite{V} and comparison will also be interesting ,besides we can prove RH is true iff there exists a positive integer $n>1$ such that
 $\zeta_{a}(n)=\zeta_{a}^{*}(n)$,in other words, we get infinite number of RH equivalences.

{\bf Appendix}

\bigskip

When $-1<Res<0$,
\begin{eqnarray}
\int_{0}^{\infty}\frac{x^s}{x+a}dx=-\frac{\pi}{sin\pi s}a^s\nonumber
\end{eqnarray}
Where $a>0$
\bigskip

\end{document}